\numberwithin{equation}{section}
\newtheorem{thm}{Theorem}[section]
\newtheorem{cor}[thm]{Corollary}
\newtheorem{lem}[thm]{Lemma}
\newtheorem{quest}[thm]{Question}
\newtheorem{claim}[thm]{Claim}
\newtheorem*{openproblem*}{Problem}
\newtheorem*{quest*}{Question}
\newtheorem*{problem*}{Problem}
\newtheorem*{claim*}{Claim}
\theoremstyle{definition}
\theoremstyle{remark}
\newtheorem{rem}[thm]{Remark}
\newcommand{\bD}{\mathbb{D}}
\newcommand{\bQ}{\mathbb{Q}}
\newcommand{\bR}{\mathbb{R}}
\newcommand{\bS}{\mathbb{S}}
\newcommand{\bZ}{\mathbb{Z}}
\newcommand\Diff{\mathrm{Diff}}
\newcommand\BDiff{\mathrm{BDiff}}
\newcommand\dDiff{\mathrm{Diff}^{\delta}}
\newcommand\drDiff{\mathrm{Diff}^{r,\delta}}
\newcommand\BdDiff{\mathrm{BDiff}^{\delta}}
\newcommand{\hcoker}{/\!\!/}
\let\c@equation\c@thm
\numberwithin{equation}{section}
\let\@wraptoccontribs\wraptoccontribs
\title[]{On invariants of foliated sphere bundles}
\author[]{Sam Nariman}
\address{Department of Mathematics\\
  Purdue University\\
150 N. University Street\\
West Lafayette, IN 47907-2067\\
}
\email{snariman@purdue.edu}
\begin{document}
\begin{abstract}
Morita (\cite{morita1984nontriviality}) showed that for each integer $k\geq 1$,  there are examples of flat $\bS^1$-bundles for which the $k$-th power of the Euler class does not vanish. Haefliger asked (\cite[page 154]{MR501011}) if the same holds for flat odd-dimensional sphere bundles. In this paper, for a manifold $M$ with a free torus action, we prove that certain $M$-bundles are cobordant to a flat $M$-bundle and as a consequence, we answer Haefliger's question. We show that all monomials in the Euler class and Pontryagin classes $p_i$ for $i\leq n-1$ are non-trivial in $H^*(\BdDiff_+(\bS^{2n-1});\bQ)$. 
\end{abstract}
\maketitle
\section{Introduction}
 Let $\Diff_+(\bS^d)$ be the group of orientation-preserving smooth diffeomorphisms of the sphere $\bS^d$. Haefliger (\cite[page 242, problem 4]{MR0494114}) asked whether, for a given integer $k>1$, there exists a manifold $M$ and a representation $\pi_1(M)\to \Diff_+(\bS^1)$ such that the $k$-th power of the Euler class of the associated flat circle bundle is non-trivial.  
 
 For $k=1$,  Benzecri (\cite{MR2612352}) and Milnor (\cite{MR95518}) constructed flat linear $\bR^2$-bundles over surfaces with non-trivial Euler classes. As a consequence, there are associated flat circle bundles to the examples of Benzecri and Milnor with non-trivial Euler classes. Morita in \cite{morita1984nontriviality} answered Haefliger's question affirmatively by proving a more general theorem (see also \cite{nariman2016powers}). In particular, he showed that the powers of the Euler class $e^k$ as a class in $H^{2k}(\BdDiff_+(\bS^1);\bQ)$ are non-trivial for each integer $k$, where $\BdDiff_+(\bS^1)$ denotes the classifying space of orientable flat $\bS^1$-bundles ($\delta$ denotes the discrete topology on a given topological group).

%
Haefliger in \cite{MR501011} used rational homotopy theory to study the continuous Lie algebra cohomology of the Lie algebra of vector fields on $\bS^d$ relative to $\text{SO}(d+1)$. His calculations suggested that the powers of the Euler class might be non-trivial in the smooth group cohomology of the diffeomorphism group of odd-dimensional spheres. So he posed the following question  (\cite[page 154]{MR501011}).
 \begin{quest}[Haefliger] Are there flat $(2n - 1)$-sphere bundles
with a non-zero power of the Euler class?
 \end{quest}
To answer this question affirmatively, we prove a more general non-vanishing result about monomials in the Euler class $e$ and the Pontryagin classes $p_i$.
 \begin{thm}\label{oddspheres}
 All the monomials in $e$ and $p_i$ for $i\leq n-1$ are non-trivial in $H^*(\BdDiff_+(\bS^{2n-1});\bQ)$.
 \end{thm}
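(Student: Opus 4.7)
The plan is to leverage the cobordism theorem announced in the abstract, applied to $M=\bS^{2n-1}$ equipped with the free Hopf $\bS^1$-action by scalar multiplication on $\bS^{2n-1}\subset\bC^n$. Granting that result, every $\bS^{2n-1}$-bundle associated via the Hopf action to a principal $\bS^1$-bundle is cobordant, as an oriented $\bS^{2n-1}$-bundle, to a flat one. Since characteristic numbers pulled back from $\BDiff_+(\bS^{2n-1})$ are cobordism invariants, it then suffices to exhibit smooth (non-flat) $\bS^{2n-1}$-bundles of this form whose characteristic numbers realize every monomial in $e,p_1,\dots,p_{n-1}$ non-trivially.

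For a monomial $c=e^{a_0}\prod_{k=1}^{n-1}p_k^{a_k}$ of total degree $2N$ with $N=na_0+2\sum_k ka_k$, I would take the candidate bundle
\[
\pi\colon E:=\bS^{2N+1}\times_{\bS^1}\bS^{2n-1}\longrightarrow\bCP^N,
\]
where $\bS^{2N+1}\to\bCP^N$ is the Hopf principal $\bS^1$-bundle. The stabilized vertical tangent bundle satisfies $T_\pi E\oplus\bR\cong\gamma^{\oplus n}$, the $n$-fold Whitney sum of the tautological complex line bundle $\gamma$ on $\bCP^N$, since $T\bS^{2n-1}\oplus\bR$ is $\bS^1$-equivariantly trivialized by $\bC^n$ with the diagonal scalar action. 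Writing $h=c_1(\gamma)$, the splitting principle gives $e(E)=h^n$ and $p_k(E)=\binom{n}{k}h^{2k}$ for $1\leq k\leq n-1$, so
\[
c(E)=\Bigl(\prod_{k=1}^{n-1}\binom{n}{k}^{a_k}\Bigr)h^N
\]
is a non-zero rational multiple of the top generator of $H^{2N}(\bCP^N;\bQ)$, and in particular the characteristic number $\langle c(E),[\bCP^N]\rangle$ is non-zero.

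Cobording to a flat $\bS^{2n-1}$-bundle $E'\to B'$ preserves this number, so $\langle c(E'),[B']\rangle\neq 0$ and therefore $c\neq 0$ in $H^*(\BdDiff_+(\bS^{2n-1});\bQ)$. The characteristic-class computation on $\bCP^N$ is elementary; the entire weight of the argument lies in the cobordism step, which I expect to be the main obstacle. Producing such a flat replacement should require a flexible construction of flat $\bS^1$-bundles in the spirit of Morita \cite{morita1984nontriviality}, combined with a careful passage from principal $\bS^1$-bundles to $\bS^{2n-1}$-bundles via the Hopf action that preserves the Euler and Pontryagin classes pulled back from $\BDiff_+(\bS^{2n-1})$.
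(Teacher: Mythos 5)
Your proposal is correct and essentially mirrors the paper's proof: both hinge on the free Hopf $\bS^1$-action on $\bS^{2n-1}$, invoke Theorem~\ref{T} for $M=\bS^{2n-1}$ with this action, and use that the monomials in $e$ and $p_i$ ($i\leq n-1$) pull back to nonzero multiples of powers of $c_1$ in $H^*(\mathrm{B}\text{U}(1);\bQ)$ --- your evaluation of characteristic numbers over $\bCP^N$ is just the finite-dimensional shadow of the paper's direct observation that the factorization supplied by Theorem~\ref{T} forces $\alpha^*\colon H^*(\BDiff_+(\bS^{2n-1});\bQ)\to H^*(\mathrm{B}\text{U}(1);\bQ)$ to factor through $H^*(\BdDiff_+(\bS^{2n-1});\bQ)$, bypassing bordism groups entirely. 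Your closing speculation that the cobordism step should go through Morita-style flat $\bS^1$-bundles is off target, though: flat principal $\bS^1$-bundles have torsion Euler class, and the paper instead proves Theorem~\ref{T} via the equivariant Mather--Thurston theorem together with the observation that the free circle action on $\bS^{2n-1}$ fixes a point of $\mathrm{Bun}(\mathrm{T}\bS^{2n-1},\nu^*\gamma_{2n-1})$, coming from the $\bS^1$-equivariant splitting $\mathrm{T}\bS^{2n-1}\oplus\epsilon\cong\pi^*(\mathrm{T}\bCP^{n-1})\oplus\epsilon^2$.
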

 \begin{rem}
 Morita pointed out to the author that the involution on $\bS^{2n-1}$ induced by reflecting through a hyperplane changes the sign of the Euler class but it does not change the Pontryagin classes. Hence, the classes $e^k$ and monomials in Pontryagin classes are linearly independent when $k$ is odd. However, the author does not know whether in general the monomials are linearly independent in $H^*(\BdDiff_+(\bS^{2n-1});\bQ)$. 
 \end{rem}
 The techniques that we use also work for volume-preserving diffeomorphisms $\Diff_{\text{vol}}(\bS^{2n-1})$, so these classes  are also non-trivial in $H^*(\BdDiff_{\text{vol}}(\bS^{2n-1});\bQ)$.
 
 Let us recall how the Pontryagin classes are defined in $H^*(\BdDiff_+(\bS^{2n-1});\bQ)$. There is a natural map 
 \[
\eta\colon  \BdDiff_+(\bS^{2n-1})\to \BDiff_+(\bS^{2n-1})
 \]
 that is induced by the identity homomorphism $\dDiff_+(\bS^{2n-1})\to \Diff_+(\bS^{2n-1})$. The homotopy fiber of $\eta$ is denoted by $\overline{\BDiff_+(\bS^{2n-1})}$. By coning the sphere to a disk and then restricting it to the interior of the disk, we obtain the following maps
 \begin{equation}\label{cone}
 \BDiff_+(\bS^{d})\to \mathrm{BHomeo}_+(\bD^{d+1})\to \mathrm{BHomeo}_+(\bR^{d+1}).
 \end{equation}
 There are topological Pontryagin classes for Euclidean fiber bundles that are defined rationally in $H^*(\mathrm{BHomeo}_+(\bR^{d+1});\bQ)$ (see \cite{MR4372633}).  Galatius and Randal-Williams (\cite{galatius2022algebraic}) proved a remarkable result that the map
 \[
 \bQ[e,p_1, p_2, \dots]\to H^*(\mathrm{BHomeo}_+(\bR^{2n});\bQ)
 \]
 is injective for $2n\geq 6$. They also proved that the map 
 \[
 \bQ[e,p_1, p_2, \dots]\to H^*(\BDiff_+(\bS^{2n-1}); \bQ),
 \]
induced by pulling back these classes to $H^*(\BDiff_+(\bS^{2n-1}); \bQ)$, is injective for $2n-1\geq 9$. To prove the non-triviality of these classes for odd-dimensional sphere bundles (not necessarily flat sphere bundles), they constructed certain finite group actions on spheres. As a result of their method, they also obtained that for $2n-1\geq 5$, the map
 \[
  \bQ[e,p_1, p_2, \dots]\to H^*(\BdDiff_+(\bS^{2n-1}); \bZ)\otimes \bQ
 \]
 is injective. However, their method of using finite group actions does not say if the image of the composition 
 \[
  \bQ[e,p_1, p_2, \dots]\to H^*(\BdDiff_+(\bS^{2n-1}); \bZ)\otimes \bQ\to H^*(\BdDiff_+(\bS^{2n-1}); \bQ),
 \]
 is non-trivial. 
 \begin{rem}
 In fact, since the groups $H_*(\BdDiff_+(M); \bZ)$ are not in general finitely generated, the map
 \[
 H^*(\BdDiff_+(M); \bZ)\otimes \bQ\to H^*(\BdDiff_+(M); \bQ)
 \]
 could have a large kernel (see \cite[Theorem 0.9]{nariman2015stable} and \cite[Theorem 8.1]{morita1987characteristic}). 
 \end{rem}
 For flat $\bS^{2n}$-bundles, in contrast, we shall see that the Bott vanishing theorem (\cite{bott1970topological}) implies the following vanishing result.
 \begin{thm}\label{vanishing} The monomials in Pontryagin classes $p_1, \dots, p_n$ whose degrees are larger than $4n$ vanish in $H^*(\BdDiff_+(\bS^{2n});\bQ)$.
 \end{thm}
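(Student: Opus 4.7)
The plan is to apply Bott's vanishing theorem~\cite{bott1970topological} to the codimension-$2n$ horizontal foliation on the total space of a flat $\bS^{2n}$-bundle, and then to push the resulting vanishing back to the base using that $\chi(\bS^{2n}) = 2 \neq 0$.

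I would begin with an arbitrary flat $\bS^{2n}$-bundle $\pi\colon E \to M$ over a smooth closed oriented manifold $M$, classified by a map $f\colon M \to \BdDiff_+(\bS^{2n})$. The flat structure endows $E$ with a smooth codimension-$2n$ foliation $\mathcal{F}$ transverse to the fibres, whose normal bundle is the vertical tangent bundle $T^vE$. Bott's theorem then gives $R\bigl(p_1(T^vE), \dots, p_n(T^vE)\bigr) = 0$ in $H^*(E;\bR)$ for every polynomial $R$ of weighted degree greater than $2\cdot 2n = 4n$. The next step is the identification $\pi^* f^* p_i = p_i(T^vE)$ in $H^{4i}(E;\bQ)$: via the cone construction~\eqref{cone} underlying the definition of $p_i$ on the base, the pullback to $E$ of the associated topological $\bR^{2n+1}$-bundle is topologically isomorphic to $T^vE \oplus \underline{\bR}$, with the trivial summand given by the radial cone direction, so their Pontryagin classes agree. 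Combining the two, $\pi^* f^* R = 0$ in $H^*(E;\bQ)$.

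Since $\chi(\bS^{2n}) = 2 \neq 0$, the Becker--Gottlieb transfer $\tau$ for the smooth fibre bundle $\pi$ satisfies $\tau^* \circ \pi^* = 2\cdot \mathrm{id}$ on $H^*(M;\bQ)$, so $\pi^*\colon H^*(M;\bQ) \to H^*(E;\bQ)$ is injective and hence $f^*R = 0$ in $H^*(M;\bQ)$. To pass from smooth manifolds to the classifying space, I would invoke Thom's theorem on the rational representability of homology by smooth oriented manifolds: every non-zero class in $H^*(\BdDiff_+(\bS^{2n});\bQ)$ is detected by its pullback under some classifying map $f$ from a closed oriented smooth manifold $M$, and the vanishing of every such $f^*R$ then forces $R$ itself to vanish in $H^*(\BdDiff_+(\bS^{2n});\bQ)$.

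The main obstacle I anticipate is the identification $\pi^* f^* p_i = p_i(T^vE)$: the classes $p_i$ on the base are defined purely topologically, via the cone construction through $\mathrm{BHomeo}_+(\bR^{2n+1})$, whereas Bott's theorem is a strictly smooth statement about the Pontryagin classes of the normal bundle of the foliation, so bridging the two requires a careful compatibility check between the topological cone construction and the smooth vertical tangent bundle. Once this identification is established, the remaining ingredients---Bott vanishing, the Becker--Gottlieb transfer, and Thom representability---combine routinely.
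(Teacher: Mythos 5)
Your proposal is correct and follows essentially the same route as the paper: Bott vanishing applied to the vertical tangent bundle of the flat bundle, the identification $\pi^*(C(E)) \cong T_{\pi}E \oplus \epsilon$ as topological Euclidean bundles, and $\chi(\bS^{2n}) = 2 \neq 0$ to push the vanishing from $E$ down to the base (the paper phrases this last step via the Gysin map, $\pi_!\bigl(e(T_{\pi}E)\cdot \pi^*P\bigr) = 2P$, rather than the Becker--Gottlieb transfer, but these are equivalent). The bundle identification you flag as the main obstacle is precisely the content of the paper's Lemma~\ref{Igusa}, which is deduced from a theorem of Igusa on cone bundles of smooth sphere bundles admitting a section, applied to the pulled-back bundle $\pi^*E \to E$ with its canonical diagonal section.
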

It is worth noting that \Cref{vanishing} is not an immediate consequence of the Bott vanishing theorem. The Bott vanishing theorem is about the vanishing of certain monomials in Pontryagin classes of the normal bundle of a foliation. For a flat $\bS^{2n}$-bundle $\pi\colon E\to B$, the vertical tangent bundle $T_{\pi}E$ is the normal of the foliation on $E$, so the Bott vanishing theorem provides the vanishing of certain monomials in Pontryagin classes of $T_{\pi}E$. We shall see in the proof of \Cref{vanishing} how the Pontryagin classes of  $T_{\pi}E$ are related to the Pontryagin classes of the sphere bundle $\pi\colon E\to B$.
\subsection*{Making bundles flat up to bordism} Our method to prove \Cref{oddspheres} is motivated by the following question in foliation theory. Suppose for an $n$-dimensional manifold $M$, we have a smooth fiber bundle $M\to E_0\to B_0$ whose fiber is $M$ and the base is a closed compact manifold. We want to see whether we can change the fiber bundle "up to bordism" to put a flat structure on its total space, meaning to put a codimension $n$  foliation on the total space that is transverse to the fibers.   More precisely, we want to find a bordism $W$ whose boundary $\partial W$ is the disjoint union $B_0\coprod B_1$ and a fiber bundle $M\to E\to W$ over the bordism such that its restriction to $B_0$ is given by $E_0$ and its restriction to $B_1$ is a foliated bundle. We use the equivariant version of Mather-Thurston's theory (\cite[Section 1.2.2]{nariman2015stable} and \cite[Section 5.1]{nariman2014homologicalstability}) to answer this question in the following two cases. 
 \begin{thm}\label{G}
 Let $G$ be a finite-dimensional connected Lie group. Then any principal $G$-bundle over a closed manifold is cobordant via $G$-bundles to a foliated $G$-bundle (not necessarily flat principal $G$-bundle).
 \end{thm}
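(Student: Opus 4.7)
The plan is to recast the statement as a bordism surjectivity and apply the equivariant Mather--Thurston theorem. Let $B\Fib_G$ denote the classifying space for principal $G$-bundles $\pi\colon P\to B$ equipped with a codimension-$\dim G$ foliation on $P$ transverse to the fibers of $\pi$ (with no $G$-invariance requirement), and let $\alpha\colon B\Fib_G\to BG$ be the forgetful map. Via the Pontryagin--Thom correspondence, the theorem is equivalent to the surjectivity of the induced map $\alpha_*\colon \Omega_*^{O}(B\Fib_G)\to \Omega_*^{O}(BG)$ (or its $SO$-variant): a preimage of the bordism class of the classifying map $f_0\colon B_0\to BG$ provides the cobordism $W$ and the foliated $G$-bundle $P_1\to B_1$ on its far end.

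The next step is to control the homotopy fiber of $\alpha$ using the equivariant Mather--Thurston theorem of \cite[Section 1.2.2]{nariman2015stable} and \cite[Section 5.1]{nariman2014homologicalstability}. Regard $G$ as a $G$-manifold via left translation, so that $\Diff_G(G)\simeq G$ acting by right translations, $\BDiff_G(G)=BG$, and the flat counterpart $\BdDiff_G(G)=BG^{\delta}$. The equivariant Mather--Thurston theorem in this setting identifies, up to homology equivalence, the homotopy fiber of $\alpha$ with an infinite loop space built from the Haefliger classifying space $B\overline{\Gamma}_{q}$ ($q=\dim G$), with normal data controlled by the adjoint bundle associated to $\mathfrak{g}$. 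Since the tangent bundle of a Lie group is trivial, this normal data is trivial, making the fiber equivalent to $\Omega^{\infty}$ of a Thom spectrum $\Th(\mathfrak{g};B\overline{\Gamma}_{q})$; in particular the fiber is a non-empty, connected, infinite loop space.

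From this description, one concludes bordism surjectivity of $\alpha$ by a standard Thom--Pontryagin argument: the obstruction to lifting a bordism class $[f_0]\in \Omega_*^{O}(BG)$ to $B\Fib_G$ lives in the bordism groups of the fiber, and because the fiber carries an infinite loop space structure, any such obstruction can be annihilated stably by adding an appropriate cobordism, providing the lift. Unpacking the lift through Pontryagin--Thom yields the desired bordism $W$ with foliated top boundary.

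The main obstacle is the precise identification of the homotopy fiber of $\alpha$ in the equivariant Mather--Thurston framework and the verification that this identification yields surjectivity on the relevant bordism groups. The crucial structural input is that we only demand a transverse foliation and not a $G$-invariant one: the relaxation corresponds to allowing the fiber of $\alpha$ to be governed by the full Haefliger space $B\overline{\Gamma}_{q}$ rather than by $BG^{\delta}\to BG$, which would correspond to genuinely flat principal $G$-bundles and land us in Milnor-conjecture-type territory well beyond what is required here.
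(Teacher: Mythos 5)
Your overall framing is right: you correctly reduce the statement to a bordism-surjectivity question for the forgetful map $\alpha$ and correctly identify the equivariant Mather--Thurston theorem as the relevant tool. But the key step in your argument does not hold, and it replaces the actual mechanism in the paper.

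First, the identification of the homotopy fiber is off. For a closed $n$-manifold $M$, the equivariant Mather--Thurston theorem (as in Corollary~\ref{MT}) models the homotopy fiber of $\BdDiff_+(M)\to\BDiff_+(M)$, acyclically, by the \emph{section space} $\mathrm{Bun}(\mathrm{T}M,\nu^*\gamma_n)$; when $\mathrm{T}M$ is trivial this is the mapping space $\mathrm{Map}(M,\overline{\mathrm{B}\Gamma}_n)$, not $\Omega^\infty$ of a Thom spectrum and not an infinite loop space. You appear to be conflating this with the scanning picture (e.g.\ Madsen--Weiss/GMTW), which applies to compactly supported diffeomorphisms or to moduli of manifolds, not to this section-space form of Mather--Thurston. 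Second, and more seriously, even if the fiber were an infinite loop space this would not yield surjectivity of $\alpha_*$ on bordism. A fibration $F\to E\to B$ with $F$ nonempty, connected, and an infinite loop space need not be surjective on $\Omega^{\mathrm{SO}}_*$; what one needs is a (homological) section of $E\to B$, and an infinite loop structure on the fiber does not produce one. The phrase ``any such obstruction can be annihilated stably by adding an appropriate cobordism'' is not a valid argument: there is no mechanism by which the infinite loop structure on the fiber kills lifting obstructions over an arbitrary base like $\mathrm{B}G$ (which is not itself an infinite loop space).

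What actually makes the argument work in the paper is a concrete fixed-point observation that you never invoke. Since $\mathrm{T}G$ is trivial and the left-translation trivialization $G\times\bR^n\cong\mathrm{T}G$ is \emph{$G$-equivariant} (with $G$ acting trivially on $\bR^n$), the equivalence $\mathrm{Map}(G,\overline{\mathrm{B}\Gamma}_n)\simeq\mathrm{Bun}(\mathrm{T}G,\nu^*\gamma_n)$ is $G$-equivariant. The constant maps in $\mathrm{Map}(G,\overline{\mathrm{B}\Gamma}_n)$ are then honest fixed points of the $G$-action, so the map $\mathrm{Map}(G,\overline{\mathrm{B}\Gamma}_n)\hcoker G\to\mathrm{B}G$ has a genuine section. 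Pushing this section through the equivariant Mather--Thurston equivalence produces the homological lift of $\alpha\colon\mathrm{B}G\to\BDiff_+(G)$ to $\BdDiff_+(G)$, and this lift is exactly what gives the commuting square on $\Omega^{\mathrm{SO}}_*$ and hence the cobordism statement. So the place to fix your proof is to replace the ``infinite loop space kills obstructions'' step with the fixed-point/section construction; the rest of your skeleton then goes through.
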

 We shall see that for the case $G=\text{SU}_2$ which is diffeomorphic to $\bS^3$, this implies that the powers of the Euler class and the first Pontryagin class are non-trivial in $H^*(\BdDiff_+(\text{SU}_2);\bQ)$.
 \begin{thm}\label{T}Suppose $M$ is a manifold with a free torus $T$ action. Let $M\to E\xrightarrow{p} B$ be a $M$-bundle over a closed manifold $B$ that is classified by a map $B\to\mathrm{B}T$. Then this $M$-bundle is cobordant to a foliated $M$-bundle.
 \end{thm}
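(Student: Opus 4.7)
Since $E\to B$ is classified by $B\to\mathrm{B}T$, we may write $E=M\times_T P$ for the principal $T$-bundle $P\to B$ classified by this map. The plan is to reduce to Theorem~\ref{G} applied to $P$. By that theorem there is a cobordism $W$ with $\partial W=B\sqcup B'$ and a principal $T$-bundle $Q\to W$ such that $Q|_B=P$ and $Q|_{B'}=:P'$ is a foliated principal $T$-bundle, carrying a codimension-$\dim T$ foliation $\mathcal{F}_{P'}$ transverse to the fibers of $P'\to B'$. Forming the associated $M$-bundle $E_W:=M\times_T Q\to W$ then yields a bordism of $M$-bundles with $E_W|_B=E$.

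It remains to exhibit $E_W|_{B'}=M\times_T P'$ as a foliated $M$-bundle. On $M\times P'$ consider the product distribution $\{0\}\oplus T\mathcal{F}_{P'}$: it is integrable with leaves of the form $\{m\}\times L$ (for $L$ a leaf of $\mathcal{F}_{P'}$) of dimension $\dim B'$, and it is transverse to the diagonal $T$-orbits, because the $M$-component of the distribution vanishes while $T$ acts freely on $M$, so no nonzero orbit vector lies in the distribution. Granting that the distribution is $T$-invariant, it descends to a foliation of dimension $\dim B'$ on $E_W|_{B'}$, and transversality of $\mathcal{F}_{P'}$ to the $T$-fibers of $P'$ passes to transversality of the descended foliation to the $M$-fibers of $E_W|_{B'}\to B'$; this exhibits $E_W|_{B'}$ as a foliated $M$-bundle and completes the bordism.

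The main obstacle is the $T$-invariance of $\mathcal{F}_{P'}$, which is not produced by the bare statement of Theorem~\ref{G}: without it, two representatives $(m,p)$ and $(tm,tp)$ of the same point of $M\times_T P'$ give different subspaces of the tangent space, and the distribution fails to descend. To secure $T$-invariance, one re-runs the equivariant Mather-Thurston argument proving Theorem~\ref{G} in the $T$-equivariant category, using the $T$-action on $M$ supplied by the hypothesis of Theorem~\ref{T}. Because $T$ is a compact torus, $T$-equivariant Haefliger structures can be produced by averaging, so the cobordism $Q\to W$ can be arranged with a $T$-invariant terminal foliation $\mathcal{F}_{P'}$. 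Granted this strengthening, the descent described above goes through and yields the desired bordism from the given $M$-bundle to a foliated $M$-bundle.
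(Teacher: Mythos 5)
Your reduction to Theorem~\ref{G} has a genuine gap at the point you flag yourself: the product distribution $\{0\}\oplus T\mathcal{F}_{P'}$ descends to $M\times_T P'$ only if $\mathcal{F}_{P'}$ is invariant under the (right) $T$-action on $P'$. But a foliation on a principal $T$-bundle $P'\to B'$ that is transverse to the $T$-fibers and $T$-invariant is exactly a flat principal $T$-connection: invariance forces the holonomies, a priori in $\Diff_+(T)$, to commute with all right translations, hence to be left translations, hence to lie in $T$. So the strengthening you need is not ``Theorem~\ref{G} with a $T$-invariant terminal foliation'' but the much stronger statement that every principal $T$-bundle is cobordant, through principal $T$-bundles, to a \emph{flat principal} $T$-bundle. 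This is false already for $T=\bS^1$: a flat principal $\bS^1$-bundle over a closed oriented surface $\Sigma$ has Euler number $0$, because its Euler class is the image of the holonomy under the Bockstein $H^1(\Sigma;\bS^1)\to H^2(\Sigma;\bZ)$ attached to $0\to\bZ\to\bR\to\bS^1\to 0$, and that Bockstein is zero on a surface since $H^2(\Sigma;\bZ)$ is torsion-free. Hence the Hopf bundle $\bS^3\to\bS^2$ is not $\bS^1$-bundle cobordant to any flat principal $\bS^1$-bundle, and your strategy cannot produce the required $P'$. The proposed fix (``averaging'' to get $T$-equivariant Haefliger structures) does not help: averaging a family of integrable distributions need not produce an integrable distribution, and in any case, as just shown, no averaging trick can yield a statement that is false.

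The paper's actual proof does not go through Theorem~\ref{G} at all and, in particular, never tries to make the principal $T$-bundle flat. Instead it exploits the $T$-action on $M$ directly to produce a $T$-\emph{fixed} point of the tangential-structure space $\mathrm{Bun}(\mathrm{T}M,\nu^*\gamma_n)$, which by the equivariant Mather--Thurston theorem gives the desired homological section of $\mathrm{B}T\to\BDiff_+(M)$ over $\BdDiff_+(M)$. The two ingredients are: (i) the vertical tangent bundle of $\pi\colon M\to Q:=M/T$ is $T$-equivariantly trivialized by differentiating the action, giving a $T$-equivariant isomorphism $TM\cong\pi^*(TQ)\oplus\epsilon^r$ on which $T$ acts trivially on the vectors; and (ii) because $\nu\colon\mathrm{BS}\Gamma_n\to\mathrm{BGL}_n^+(\bR)$ is $(n{+}2)$-connected and $\dim Q<n$, obstruction theory lets one lift $Q\to\mathrm{BGL}_n^+(\bR)$ through $\nu$, giving an element of $\mathrm{Bun}(TQ\oplus\epsilon^r,\nu^*\gamma_n)$ on which $T$ manifestly acts trivially. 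Pulling back along $\pi$ produces the $T$-fixed point. Note that the quotient $Q$ and the drop in dimension coming from the free action are essential to the obstruction-theoretic step; your reduction discards exactly this structure, which is why it gets stuck.
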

 We shall see that this theorem implies the non-vanishing results in \Cref{oddspheres}.
 \begin{rem}
 Since the techniques also work for volume-preserving diffeomorphisms, in \Cref{G} and \Cref{T}, we can arrange the foliated bundle at the other end of the bordism to have volume-preserving holonomies.
 \end{rem}
 \subsection*{The difference between flat Euclidean bundles and flat sphere bundles} Recall that the Euler class and topological Pontryagin classes are defined for $C^0$-Euclidean bundles since they are classes in $H^*(\mathrm{BHomeo}_+(\bR^{d}); \bQ)$. So one can ask about the vanishing or non-vanishing of monomials in such classes for $C^r$-flat Euclidean bundles for $r\geq 0$. For $r=0$, it is a consequence of McDuff's theorem (\cite[Section 2, Theorem 2.5]{mcduff1980homology}) that the map 
 \[
 \mathrm{BHomeo}^{\delta}_+(\bR^{d})\to  \mathrm{BHomeo}_+(\bR^{d})
 \]
 induces a homology isomorphism. In low dimensions, we know that the homotopy types are as follows: $\mathrm{Homeo}_+(\bR^3)\simeq \text{SO}(3)$ and $\mathrm{Homeo}_+(\bR^2)\simeq \text{SO}(2)$ (\cite[Theorem 1.2.3]{MR0334262} and \cite{Balcerak1981HomotopyTO}). 
 Therefore,  for Euclidean bundles in low dimensions, the natural maps $\bQ[p_1]\to H^*(\mathrm{BHomeo}_+(\bR^3);\bQ)$ and $\bQ[e]\to H^*(\mathrm{BHomeo}_+(\bR^2);\bQ)$ induce isomorphisms. In higher dimensions, we have the result of Galatius--Randal-Williams (\cite{galatius2022algebraic}) that the map
 \[
 \bQ[e,p_1, p_2, \dots]\to H^*(\mathrm{BHomeo}_+(\bR^{2n});\bQ)
 \]
 is injective for $2n\geq 6$ and in the odd-dimensional case, from \cite[Corollary 1.2]{galatius2022algebraic} we have that the map
  \[
 \bQ[p_1, p_2, \dots]\to H^*(\mathrm{BHomeo}_+(\bR^{2n-1});\bQ)
 \]
 is injective for $2n-1\geq 7$. Combining these results with McDuff's theorem, we obtain that for $C^0$-{\it flat} $\bR^{2n}$-bundles, all classes in $ \bQ[e,p_1, p_2, \dots]$ are non-trivial when $2n\geq 6$ and when $n=1$, all powers of the Euler class are non-trivial. For $C^0$-{\it flat} $\bR^{2n-1}$-bundles, all classes in $ \bQ[p_1, p_2, \dots]$ are non-trivial when $2n-1\geq 7$ and when $n=2$, all powers of the first Pontryagin class are non-trivial.
 
 However, for higher regularities, the situation is different. Let $\mathrm{Diff}_+^{r}(\bR^n)$ denote $C^r$ orientation-preserving diffeomorphisms of $\bR^n$. It is a deep result of Segal (\cite[Prop.~1.3 and~3.1]{segal1978classifying}),  that there is a map $$\mathrm{BDiff}_+^{r,\delta}(\bR^n)\to \mathrm{BS}\Gamma^{r}_{n}$$ that induces a homology isomorphism, where $\mathrm{BS}\Gamma^{r}_{n}$ is the classifying space of Haefliger structures for codimension $n$ foliations that are transversely oriented (see \cite[Section~1]{segal1978classifying} for more details).  There is also a map 
\[
\nu\colon \mathrm{BS}\Gamma^{r}_{n}\to \mathrm{B}\text{GL}^+_n(\bR)
\]
which classifies oriented normal bundles to the codimension $n$ foliations where $\text{GL}^+_n(\bR)$ is the group of invertible matrices with positive determinant. For all regularities, Haefliger showed that the map $\nu$ is at least $(n+1)$-connected (\cite[Remark~1 of Section~II.6]{haefliger1971homotopy}). Thurston proved (\cite{thurston1974foliations}) that for $r=\infty$, the map $\nu$ is $(n+2)$-connected and Mather showed that the same holds for $r\neq n+1$ (\cite[Section 7]{MR0356129}).  Hence, in particular, the induced map 
\[
\tilde{\nu}^*\colon H^*(\mathrm{B}\text{GL}^+_n(\bR); \bQ)\to H^*(\mathrm{B}\drDiff_{+}(\bR^n); \bQ)
\]
is an isomorphism for $*\leq n$ for all regularities. So the Pontryagin classes $p_i$ for $i\leq n/4$  are non-trivial and so is the Euler class $e$ when $n$ is even for all $r$. Therefore, for $C^r$-flat $\bR^n$ bundles, monomials in Pontryagin classes and the Euler class (if $n$ is even) are non-trivial if the degree of the monomial is less than $n+1$. Conjecturally (\cite[Problem 14.5]{hurder2003foliation}), the map $\tilde{\nu}^*$ is injective for $*\leq 2n$. For $r>1$ and $*>2n$, we have the Bott vanishing theorem (\cite{bott1970topological}) that implies that the monomials in Pontryagin classes of degrees greater than $2n$ vanish. In particular, when $r>1$ we have $e^4=p_n^2=0$ in $H^{8n}(\mathrm{B}\drDiff_{+}(\bR^{2n}); \bQ)$ unlike the case of powers of the Euler class in $H^*(\mathrm{B}\drDiff_{+}(\bS^{2n-1}); \bQ)$.
 
 \subsection*{Prigge's calculation and a correction of Haefliger's claim} There is another perspective to characteristic classes of flat manifold bundles due to Gelfand and Fuks (\cite{haefliger1975cohomologie}). For the case of flat sphere bundles, let $\mathcal{L}_{\bS^n}$ denote the topological Lie algebra of smooth vector fields on $\bS^n$. For the case of $n=1$, there is a natural map
 \[
\Phi\colon H^*(\mathcal{L}_{\bS^1}, \text{so}(2))\to H^*(\BdDiff_+(\bS^1);\bR),
 \]
 where $H^*(\mathcal{L}_{\bS^1}, \text{so}(2))$ is the relative continuous Lie algebra cohomology (see \cite[page 44]{haefliger2010differential}). The Lie algebra cohomology $H^*(\mathcal{L}_{\bS^1}, \text{so}(2))$ has been calculated by Gelfand and Fuks and is isomorphic to $\bR[e, \text{gv}]/(e\cdot \text{gv}=0)$, where $e$ is the Euler class and $\text{gv}$ is also a degree $2$ class known as the Godbillon-Vey class. Morita (\cite[Theorem 1.1]{morita1984nontriviality}) showed that this map is injective. 

There is a general van Est type theorem (\cite[page 43]{haefliger2010differential}) that for the case of the group $\Diff_+(\bS^3)$ implies that $H^*(\mathcal{L}_{\bS^3}, \text{so}(4))$ is isomorphic to the smooth group cohomology $H^*_{\text{sm}}(\Diff_+(\bS^3);\bR)$.  In higher dimensions, it is expected from Bott's belief in \cite[page 217]{MR488080} that there is at least a map
\[
H^*_{\text{sm}}(\Diff_+(\bS^n);\bR)\to H^*(\mathcal{L}_{\bS^n}, \text{so}(n+1)).
\]
Haefliger in \cite[page 154]{MR501011} sketched his method using rational homotopy theory to prove a claim  that the kernel of the map 
  \[
 H^*(\mathrm{B}\text{SO}(n+1); \bR)\to H^*(\mathcal{L}_{\bS^n}, \text{so}(n+1))
 \]
 is generated by the monomials in Pontryagin classes $p_1, \dots, p_{\lfloor n/2\rfloor}$ whose degrees are larger than $2n$. So according to his claim, $p_1^2$ lies in the kernel of the map 
   \[
 H^*(\mathrm{B}\text{SO}(4); \bR)\to H^*(\mathcal{L}_{\bS^3}, \text{so}(4)),
 \]
  where the target is isomorphic to  $H^*_{\text{sm}}(\Diff_+(\bS^3);\bR)$ by the van Est theorem. So Haefliger's claim that $p_1^2$ vanishes in $H^*_{\text{sm}}(\Diff_+(\bS^3);\bR)$ would imply its vanishing also in $H^*(\BdDiff_+(\bS^{3});\bR)$. But Nils Prigge in an appendix in the earlier version of this paper which shall become a separate paper proved that Haefliger's vanishing only works when $n$ is even, and using his method Prigge showed that for $n=3$, in fact, the class $p_1^2$ is not zero in the smooth group cohomology of $\Diff_+(\bS^3)$ which is isomorphic to $H^*(\mathcal{L}_{\bS^3}, \text{so}(4))$.

 In a forthcoming paper \cite{Nils}, Prigge in particular proves that 
   \[
 H^*(\mathrm{B}\text{SO}(4); \bR)\to H^*(\mathcal{L}_{\bS^3}, \text{so}(4)),
 \]
 is injective, which corrects the claim by Haefliger (\cite[page 154]{MR501011}). In the course of the proof, he found new relations between characteristic classes in the Gelfand-Fuks cohomology $H^*(\mathcal{L}_{\bS^3}, \text{so}(4))$ which is isomorphic to the smooth group cohomology $H^*_{\text{sm}}(\Diff_+(\bS^3);\bR)$. Morita observed that these relations combined with our non-vanishing result give a new type of relations between secondary characteristic classes and characteristic classes in $H^*(\BdDiff_+(\bS^{3});\bR)$. 
 
 To describe such a relation, recall for a foliation $\mathcal{F}$ on $M$ with a trivial normal bundle, there are secondary classes in $H^*(M)$ coming from Gelfand-Fuks cohomology (see \cite{pittie1976characteristic}). For the universal flat trivial $\bS^3$-bundle $$\pi\colon \bS^3\times \overline{\BDiff_+(\bS^3)}\to  \overline{\BDiff_+(\bS^3)},$$ we have a codimension $3$ foliation on the total space and there is a characteristic class $h_2c_2$ in $H^7(\bS^3\times \overline{\BDiff_+(\bS^3)};\bR)$ (see \cite{pittie1976characteristic} and \cite{Nils} for the definition of these characteristic classes). If we integrate $h_2c_2$ along the fiber, we obtain a class $\int_{\pi}h_2\cdot c_2$ in $H^4(\overline{\BDiff_+(\bS^3)};\bR)$. There is a certain class $\bar{x}_2$ in $H^4_{\text{sm}}(\Diff_+(\bS^3);\bR)$ whose image in $H^4(\BdDiff_+(\bS^3);\bR)$ extends the class $\int_{\pi}h_2\cdot c_2$  to a class in $H^4(\BdDiff_+(\bS^3);\bR)$. We denote this extension of $\int_{\pi}h_2\cdot c_2$  also by $\bar{x}_2$. Morita observed that Prigge's calculation implies the equality
 \[
 p_1^2=-e\cdot \bar{x}_2
 \]
 in $H^*(\BdDiff_+(\bS^3);\bR)$. Since we showed that $p_1^2$ is non-trivial, the class $\bar{x}_2$ is also non-trivial. This relation is interesting because the secondary class $h_2c_2$ is a continuously varying class (see \cite[Remark 2.9]{MR0769761} where the notation is $y_2c_2$). So $\bar{x}_2$ is intrinsically a real-valued class but $p_1$ and $e$ are defined over rational numbers. We hope to pursue finding such relations for higher dimensional spheres.  
 \subsection*{Acknowledgments} I am first and foremost indebted to Shigeyuki Morita for his questions, comments, and corrections that led me to write this paper. I am grateful to S\o ren Galatius for his comments and suggestions to study the free torus action in the context of Mather-Thurston's theorem. I would like to thank Sander Kupers for sending me the reference \cite{MR2509703}. I also would like to heartily thank Nils Prigge's efforts to rectify Haefliger's calculations.  The author was partially supported by  NSF CAREER Grant DMS-2239106 and Simons Foundation Collaboration Grant (855209).
 
 \section{Equivariant Mather-Thurston's theorem} In this section, we recall from \cite[Section 1.2.2]{nariman2015stable} and \cite[Section 5.1]{nariman2014homologicalstability} the equivariant version of Mather-Thurston's theorem as the main tool in this paper. 
 
 Mather-Thurston's theorem (\cite{MR0356085,thurston1974foliations}) is an h-principle theorem in foliation theory that relates the homotopy type of the classifying space of  Haefliger space to the group homology of diffeomorphism groups. Since we are interested in orientation-preserving diffeomorphisms in this paper, we recall Mather-Thurston's theorem in this context. Let $M$ be an orientable smooth manifold and let $\Diff_+^r(M)$ denote the group of $C^r$ orientation-preserving diffeomorphisms of  $M$ with the $C^r$-Whitney topology. If we drop the regularity $r$, we mean smooth diffeomorphisms. We decorate it with superscript $\delta$  if we consider the same group with the discrete topology. The identity homomorphism $\Diff_+^{r,\delta}(M)\to \Diff_+^r(M)$ induces a map
   \begin{equation}\label{eta}
   \eta: \BDiff_+^{r,\delta}(M)\to \BDiff_+^r(M).
   \end{equation}

   Thurston in fact studied $\overline{\BDiff_+^r(M)}$ which is the homotopy fiber of the map $\eta$. This space classifies foliated trivial $M$-bundles. Consider a semi-simplicial model $\overline{\BDiff_+^r(M)}_{\bullet}$, where the set of $k$-simplicies is given by the set of foliations on the trivial bundle $\Delta^k\times M\to \Delta^k$ that are transverse to the fibers and whose holonomies lie in $\Diff^r_+(M)$. The (fat) realization  $||\overline{\BDiff_+^r(M)}_{\bullet}||$ is a model for $\overline{\BDiff_+^r(M)}$.
   
   Note that the simplicial group $\text{Sing}_{\bullet}(\Diff_+^r(M))$, which is the singular complex of the topological group $\Diff_+^r(M)$, acts levelwise on $\overline{\BDiff_+^r(M)}_{\bullet}$. Milnor's theorem (\cite{MR0084138}) implies that the group $||\text{Sing}_{\bullet}(\Diff_+^r(M))||$ is homotopy equivalent to $\Diff_+^r(M)$, and given that $\overline{\BDiff_+^r(M)}$ is the homotopy fiber of $\eta$, the homotopy quotient 
   \begin{equation}\label{bdiff}
||\overline{\BDiff^r_+(M)}_{\bullet}||\hcoker ||\text{Sing}_{\bullet}(\Diff^r_+(M))||
\end{equation}
is weakly equivalent to $\BDiff_+^{r,\delta}(M)$.

The space $\overline{\BDiff_+^r(M)}$ is the geometric part of Mather-Thurston's theorem. To recall the part that is more amenable to homotopy theoretic techniques, let $\mathrm{S}\Gamma_n^r$ denote the topological groupoid whose space of objects is $\bR^n$ and whose space of morphisms is given by germs of $C^r$ orientation-preserving diffeomorphisms between two points in $\bR^n$ with a sheaf topology (see \cite{haefliger1971homotopy}). There is a natural map
   \[
 \nu\colon  \mathrm{BS}\Gamma_n^r\to \mathrm{B}\text{GL}^+_n(\bR)
   \]
    induced by the derivative of germs. Let $\overline{\mathrm{B}\Gamma_n^r}$ denote the homotopy fiber of the map $\nu$. Let $\tau_M\colon M\to  \mathrm{B}\text{GL}^+_n(\bR)$ be the map that classifies the tangent bundle and let $\tau_M^*(\nu)$ be the bundle over $M$ induced by the pullback of the map $\nu$ via $\tau_M$.  Let $\text{Sect}(\tau_M^*(\nu))$ be the space sections of $\tau_M^*(\nu)$. One can find a model for $\text{Sect}(\tau_M^*(\nu))$ on which $\Diff_+^r(M)$ acts as follows.  A $\nu$-tangential-structure on the $n$-dimensional manifold  $M$ is a bundle map $\mathrm{T}M\to \nu^*\gamma_n$, where $\gamma_n$ is the tautological vector bundle over $ \mathrm{BGL}_n^+(\bR)$. We denote the space of $\nu$-tangential-structures on  $M$ by $\mathrm{Bun}(\mathrm{T}M, \nu^*\gamma_n)$ and equip it with the compact-open topology. Note that $\Diff_+^r(M)$ acts on  $\mathrm{Bun}(\mathrm{T}M, \nu^*\gamma_n)$ by precomposing with the differential of diffeomorphisms. In \cite[Section 1.2.2]{nariman2015stable}, we showed the following version of Mather-Thurston's theorem.

\begin{thm}[Equivariant Mather-Thurston] There is a semi-simplicial map
\[
\overline{\BDiff^r_+(M)}_{\bullet}\to \text{Sing}_{\bullet}(\mathrm{Bun}(\mathrm{T}M, \nu^*\gamma_n)),
\]
which is equivariant with respect to the $\text{Sing}_{\bullet}(\Diff_+^r(M))$-action and  induces an acyclic map between fat realizations. 
\end{thm}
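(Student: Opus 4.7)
The plan is to construct the semi-simplicial map $\Phi_\bullet$ degree by degree, verify equivariance directly from the definitions of the two actions, and then reduce acyclicity on fat realizations to the classical (non-equivariant) Mather--Thurston h-principle.

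First I would construct $\Phi_k$ as follows. A $k$-simplex of $\overline{\BDiff^r_+(M)}_\bullet$ is a codimension-$n$ foliation $\mathcal{F}$ on $\Delta^k\times M$ transverse to $\mathrm{pr}_1\colon \Delta^k\times M\to \Delta^k$ with holonomies in $\Diff^r_+(M)$. Any such $\mathcal{F}$ carries a canonical $\mathrm{S}\Gamma^r_n$-Haefliger structure whose normal bundle is the vertical tangent bundle $\mathrm{pr}_2^*\mathrm{T}M$. The classifying map of this Haefliger structure lifts the tangential classifying map of $\mathrm{T}M$ along $\nu$, in a way that varies continuously in the $\Delta^k$-parameter. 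Restricting to any point of $\Delta^k$ gives a section of $\tau_M^*(\nu)$, i.e.\ an element of $\mathrm{Bun}(\mathrm{T}M, \nu^*\gamma_n)$, and the whole family is precisely a continuous map $\Delta^k\to \mathrm{Bun}(\mathrm{T}M, \nu^*\gamma_n)$, that is, a $k$-simplex of $\text{Sing}_{\bullet}(\mathrm{Bun}(\mathrm{T}M, \nu^*\gamma_n))$. The face maps are respected because restricting a foliation along a face inclusion $\Delta^{k-1}\hookrightarrow \Delta^k$ restricts the classifying Haefliger data.

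For equivariance, $\phi\in \Diff^r_+(M)$ acts on the foliation side by pushing $\mathcal{F}$ forward under $\mathrm{id}\times \phi$. On the Haefliger side this replaces the classifying datum by its precomposition with $\mathrm{id}\times \phi$, which on the bundle map out of $\mathrm{T}M$ is exactly precomposition with $d\phi$. This is the action of $\text{Sing}_\bullet(\Diff^r_+(M))$ on $\text{Sing}_\bullet(\mathrm{Bun}(\mathrm{T}M, \nu^*\gamma_n))$ recalled before the statement, so $\Phi_\bullet$ is equivariant by construction.

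The heart of the argument is the acyclicity of the realized map. Forgetting the equivariance, the realization of $\Phi_\bullet$ is a model for the classical Mather--Thurston comparison map: it sends a foliated trivial $M$-bundle to its underlying formal (tangential) foliation data. Mather--Thurston's h-principle (\cite{MR0356085,thurston1974foliations}) asserts precisely that this map is acyclic. The main obstacle is to ensure that the acyclicity is compatible with the $\Diff^r_+(M)$-action at the semi-simplicial level, since Thurston's original argument proceeds through a jiggling/immersion-theoretic construction that is not obviously natural in the fiber. This verification was carried out in the author's earlier work (\cite[Section 1.2.2]{nariman2015stable} and \cite[Section 5.1]{nariman2014homologicalstability}) by showing that the Haefliger-structure construction can be performed naturally in the foliated $M$-bundle; once this is granted, $\Phi_\bullet$ is acyclic on fat realizations and the theorem follows.
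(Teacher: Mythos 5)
The paper itself does not prove this statement in the present text: after recalling the setup, it writes ``In \cite[Section 1.2.2]{nariman2015stable}, we showed the following version of Mather--Thurston's theorem'' and simply records the theorem, deferring the argument to \cite{nariman2015stable} and \cite{nariman2014homologicalstability}. Your sketch has the right ingredients and does match the shape of the argument given in those references: a $k$-simplex foliation $\mathcal{F}$ on $\Delta^k \times M$ transverse to $\mathrm{pr}_1$ carries a canonical Haefliger structure with normal bundle $\mathrm{pr}_2^*\mathrm{T}M$, and the associated $\nu$-structure is exactly a $k$-simplex of $\mathrm{Sing}_\bullet(\mathrm{Bun}(\mathrm{T}M, \nu^*\gamma_n))$; compatibility with face maps and equivariance under $\Diff^r_+(M)$ both follow because pulling back a foliation along $\mathrm{id}\times\phi$ pulls back the Haefliger data along $d\phi$.

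One point of framing is slightly off, and worth correcting because it misidentifies where the real work lies. You describe the main obstacle as ``ensuring the acyclicity is compatible with the $\Diff^r_+(M)$-action.'' But acyclicity of the map $\lVert\Phi_\bullet\rVert$ between fat realizations is a statement about a single map of spaces; it does not need to be checked equivariantly, and Thurston's jiggling argument need not be performed naturally for it to apply. What does need to be arranged naturally is the construction of $\Phi_\bullet$ itself: one must choose the Haefliger-classifying data so that $\Phi_k$ is a genuine (not merely up-to-homotopy) semi-simplicial map that strictly commutes with the $\Diff^r_+(M)$-action. This is handled in the cited references by working with geometric/groupoid models of $\mathrm{BS}\Gamma^r_n$ where the classification is canonical. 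Once $\Phi_\bullet$ exists as an equivariant semi-simplicial map, one identifies $\lVert\Phi_\bullet\rVert$ with the classical Mather--Thurston comparison $\overline{\BDiff^r_+(M)}\to \mathrm{Bun}(\mathrm{T}M,\nu^*\gamma_n)$, and acyclicity is then a straight import of the non-equivariant theorem of Mather, Thurston, and McDuff. With this reframing your sketch is essentially the intended proof, though like the paper it ultimately outsources the technical verification to \cite{nariman2015stable} and \cite{nariman2014homologicalstability}.
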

\begin{rem}
Mather and Thurston (\cite{MR0356085}) first proved that this map is a homology isomorphism for compact manifolds and for compactly supported diffeomorphisms of open manifolds. Later McDuff proved (\cite{mcduff1980homology}) the non-compactly supported case for open manifolds. 
\end{rem}

By Milnor's theorem (\cite{MR95518}) and the fact that the fat realization and geometric realization of simplicial sets are weakly equivalent (\cite[Section 1.2]{MR3995026}), the natural map $|| \text{Sing}_{\bullet}(\mathrm{Bun}(\mathrm{T}M, \nu^*\gamma_n))||\to \mathrm{Bun}(\mathrm{T}M, \nu^*\gamma_n)$ is a weak equivalence. So after realization, we obtain a map
\[
\overline{\BDiff^r_+(M)}\to \mathrm{Bun}(\mathrm{T}M, \nu^*\gamma_n)
\]
that induces an acyclic map and is equivariant with respect to the map $||\text{Sing}_{\bullet}(\Diff_+^r(M))||\xrightarrow{\simeq} \Diff_+^r(M)$.
\begin{cor}\label{MT} The map $\eta$ in \eqref{eta}, factors as follows 
\[
\BDiff_+^{r,\delta}(M)\xrightarrow{\beta} \mathrm{Bun}(\mathrm{T}M, \nu^*\gamma_n)\hcoker \Diff^r_+(M)\to  \BDiff_+^r(M),
\]
where the map $\beta$ is an acyclic map.
\end{cor}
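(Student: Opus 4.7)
The plan is to assemble the corollary directly from the equivariant Mather--Thurston theorem and the identification in \eqref{bdiff}. First I would combine \eqref{bdiff} with Milnor's theorem $\|\mathrm{Sing}_\bullet(\Diff^r_+(M))\|\simeq \Diff^r_+(M)$ to rewrite
\[
\BDiff_+^{r,\delta}(M)\simeq \overline{\BDiff^r_+(M)}\hcoker \Diff^r_+(M),
\]
and note that under this equivalence the map $\eta$ of \eqref{eta} becomes the natural projection from the Borel construction onto $\BDiff^r_+(M)$, since $\overline{\BDiff^r_+(M)}$ is by construction the homotopy fiber of $\eta$.

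Next I would define $\beta$ by taking homotopy orbits of the $\Diff^r_+(M)$-equivariant map $\overline{\BDiff^r_+(M)}\to \mathrm{Bun}(\mathrm{T}M,\nu^*\gamma_n)$ provided by the equivariant Mather--Thurston theorem (after passing to realizations and applying Milnor's theorem to turn the $\mathrm{Sing}_\bullet$-action into a genuine $\Diff^r_+(M)$-action up to homotopy). The required factorization $\eta=(\text{projection})\circ \beta$ is then immediate: both spaces in the factorization are Borel constructions over $\BDiff^r_+(M)$, and $\beta$ is a map of such fibrations covering the identity on the base, so the composite with the projection is exactly $\eta$.

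The substantive step is to verify that $\beta$ is acyclic, i.e.\ induces an isomorphism on homology with any local coefficient system. Here $\beta$ is a map of fibrations over $\BDiff^r_+(M)$, whose map on fibers $\overline{\BDiff^r_+(M)}\to \mathrm{Bun}(\mathrm{T}M,\nu^*\gamma_n)$ is acyclic by the equivariant Mather--Thurston theorem. I would compare the two Serre spectral sequences with local coefficients: fiberwise acyclicity yields an isomorphism on $E_2$-pages for any local system, which propagates to an isomorphism of $E_\infty$-pages and hence to a homology isomorphism on total spaces for all local coefficient systems, i.e.\ acyclicity of $\beta$.

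The main obstacle I expect is this last comparison: one must check that the fiberwise acyclicity statement from the equivariant Mather--Thurston theorem genuinely passes to an acyclicity statement on Borel constructions, which requires both that the $\Diff^r_+(M)$-equivariance is strong enough to give well-defined local systems on $\BDiff^r_+(M)$ and that the acyclic map on fibers is compatible with the monodromy on both sides. Everything else is formal unwinding of the definitions of $\eta$, $\beta$ and the homotopy orbit construction.
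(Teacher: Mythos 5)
Your proposal is correct and matches the paper's (implicit) argument: the corollary is obtained by passing the equivariant acyclic map $\overline{\BDiff^r_+(M)}\to \mathrm{Bun}(\mathrm{T}M,\nu^*\gamma_n)$ to homotopy orbits over $\Diff^r_+(M)$, using the identification $\BDiff_+^{r,\delta}(M)\simeq \overline{\BDiff^r_+(M)}\hcoker\Diff^r_+(M)$ from \eqref{bdiff} and the fact that a fiberwise acyclic map of fibrations over the same base is acyclic. A marginally slicker justification of the final step than your spectral-sequence comparison is to observe that since $\beta$ covers the identity on $\BDiff^r_+(M)$, each homotopy fiber of $\beta$ agrees with a homotopy fiber of the map of fibers, which is acyclic by the equivariant Mather--Thurston theorem; acyclicity of a map is equivalent to acyclicity of its homotopy fibers.
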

 McDuff also proved the volume-preserving case of Mather-Thurston's theorem (\cite{MR707329,MR699012, mcduff1983local}, \cite[Section 2.2]{nariman2016moduli}). Suppose that $M$ is a compact manifold with a fixed volume form. Let $\Gamma_n^{\text{vol}}$ denote the topological groupoid whose space of objects is $\bR^n$ and whose space of morphisms is given by germs of volume-preserving diffeomorphisms of $\bR^n$. Now let $\theta\colon \mathrm{B}\Gamma_n^{\text{vol}}\to \mathrm{B}\text{SL}_n(\bR)$ and also let $\gamma_n$ be the tautological vector bundle over $ \mathrm{BSL}_n(\bR)$. Similarly we can define $\overline{\BDiff_{\text{vol}}(M)}$ and $\mathrm{Bun}(\mathrm{T}M, \theta^*\gamma_n)$. Then the equivariant version of McDuff's theorem (\cite[Section 2.2]{nariman2016moduli}) says that there is a semi-simplicial map
\[
\overline{\BDiff_{\text{vol}}(M)}_{\bullet}\to \text{Sing}_{\bullet}(\mathrm{Bun}(\mathrm{T}M, \theta^*\gamma_n)),
\]
which is equivariant with respect to the $\text{Sing}_{\bullet}(\Diff_{\text{vol}}^r(M))$-action and induces an acyclic map between fat realizations to {\it the connected component} that it hits.

\section{Making a bundle flat up to bordism} In this section, we prove the main theorems \ref{G} and \ref{T} and then we show how \Cref{T} implies \Cref{oddspheres}.
\begin{proof}[Proof of \Cref{G}] We first translate the problem into finding a ``homological" section as follows. A principal $G$-bundle is classified by a map to the classifying space $\mathrm{B}G$. Forgetting the principal structure and just considering it as a $G$-smooth fibration induces a map 
\[
\alpha\colon \mathrm{B}G\to \BDiff_+(G).
\]
Here we assume that the map $\alpha$ is induced by the inclusion $G\to \Diff_+(G)$ that is given by the {\it left} action of $G$ on itself. 

To obtain a foliated $G$-bundle, we need to lift the map $\alpha$ to $\BdDiff_+(G)$. This is not always possible but we show that it is possible to ``homologically'' lift $\alpha$, meaning that we find the dotted line below to make the following diagram commute up to homotopy
\begin{equation}\label{l}
\begin{gathered}
 \begin{tikzpicture}[node distance=1.8cm, auto]
  \node (A) {$\mathrm{B}G$};
  \node (B) [right of=A, node distance=3cm] {$ \BDiff_+(G).$};
  \node (C) [above of= B ] {$\mathrm{Bun}(\mathrm{T}G, \nu^*\gamma_n)\hcoker \Diff_+(G)$};  
   \draw [->] (A) to node {$\alpha$}(B);
  \draw [->] (C) to node {$$}(B);
  \draw [->, dotted] (A) to node {$$}(C);
\end{tikzpicture}
\end{gathered}
\end{equation}
Then, \Cref{MT} would imply that we have a commutative diagram between oriented bordism groups
\[
 \begin{tikzpicture}[node distance=1.8cm, auto]
  \node (A) {$\Omega^{\text{SO}}_*(\mathrm{B}G)$};
  \node (B) [right of=A, node distance=3cm] {$\Omega^{\text{SO}}_*(\BDiff_+(G)),$};
  \node (C) [above of= B ] {$\Omega^{\text{SO}}_*(\BdDiff_+(G))$};  
   \draw [->] (A) to node {$\alpha_*$}(B);
  \draw [->] (C) to node {$$}(B);
  \draw [->] (A) to node {$$}(C);
\end{tikzpicture}
\]
which in turn implies the bordism statement in \Cref{G}. 

Suppose that $G$ is $n$-dimensional. Since the tangent bundle $\mathrm{T}G$ is trivial, the space of bundle maps $\mathrm{Bun}(\mathrm{T}G, \nu^*\gamma_n)$ is homotopy equivalent to the space of maps $\text{Map}(G, \overline{\mathrm{B}\Gamma_n})$. This homotopy equivalence, however, is not $\Diff_+(G)$-equivariant. But note that the trivialization $G\times \bR^n\to TG$ is $G$-equivariant with respect to the left action of $G$ on itself, the trivial action on $\bR^n$, and the action on $\mathrm{T}G$ induced as a subgroup of $\Diff(G)$ acting from the left on  $\mathrm{T}G$. Recall the trivialization sends $(g,v)$ to $(g, Dg(v))$, where $Dg$ is the differential of the left action by $g$ and it is easy to see that this map is $G$-equivariant. For example, consider $(h, w)$ in $\mathrm{T}G$ and $g$ in $G$. Then $g.(h, w)=(gh, Dg(w))$. Note that $(h, w)$ comes from $(h, D(h^{-1})(w))$ under the isomorphism $G\times \bR^n\to TG$. When $g$ acts on $(h, D(h^{-1})(w))$ as an element in $G\times \bR^n$, we obtain $(gh, D(h^{-1})(w))$ which maps to $(gh, Dg(w))$ via the isomorphism  $G\times \bR^n\to TG$. Hence, this isomorphism is $G$-equivariant. So the natural map
\[
f\colon \text{Map}(G, \overline{\mathrm{B}\Gamma}_n)\to  \mathrm{Bun}(\mathrm{T}G, \nu^*\gamma_n)
\]
is equivariant with respect to the $G$-actions on both sides. Hence, we have a commuting diagram 

\[
   \begin{tikzpicture}[node distance=3cm, auto]
  \node (B) [] {$\text{Map}(G, \overline{\mathrm{B}\Gamma}_n)\hcoker G$};
  \node (D) [right of=B, node distance=4cm] {$ \mathrm{Bun}(\mathrm{T}G, \nu^*\gamma_n)\hcoker \Diff_+(G)$};
    \node (E) [below of=B, node distance=1.5cm] {$\mathrm{B}G$};
  \node (F) [below of=D, node distance=1.5cm] {$\mathrm{B}\Diff_+(G).$};
  \draw [->] (B) to node {$$} (D);
    \draw [->] (E) to node {$\alpha$} (F);
  \draw [->] (B) to node {$$} (E);
  \draw [->] (D) to node {$$} (F);
\end{tikzpicture}
 \]
Note that the action of $G$ on the mapping space has fixed points (constant maps), so the left map to $\mathrm{B}G$ has a section. Therefore, the map $\alpha$ can be lifted to $ \mathrm{Bun}(\mathrm{T}G, \nu^*\gamma_n)\hcoker \Diff_+(G)$. 
\end{proof}
Already \Cref{G} implies \Cref{oddspheres} for $\bS^3$ as follows. Note that $G=\bS^3$ is a Lie group and also by Hatcher's theorem $\BDiff_+(\bS^3)\simeq \mathrm{B}\text{SO}(4)$. So the action of $\bS^3$ on itself from the left induces the map $\alpha$ that on cohomology gives
\[
\alpha^*\colon  H^*(\BDiff_+(\bS^3);\bQ)\cong \bQ[e, p_1]\to H^*(\mathrm{B}\bS^3; \bQ)\cong \bQ[c_2],
\]
where $e, p_1, c_2$ are the universal Euler class, the first Pontryagin class, and the second Chern class respectively. So we have $\alpha^*(e)=c_2$ and $\alpha^*(p_1)=-2c_2$. Given the diagram \eqref{l} that $\alpha^*$ factors through the group $H^*(\BdDiff_0(\bS^3); \bQ)$, the powers of the Euler class and the first Pontryagin class map non-trivially to $H^*(\BdDiff_0(\bS^3); \bQ)$.
Now using the same idea, we are ready to prove \Cref{T}.
\begin{proof}[Proof of \Cref{T}]
Let $n$ be the dimension of $M$. For the free torus $T=(S^1)^r$ action on $M$, let $Q$ be the quotient $M/T$ and $\pi$ be the natural map $\pi\colon M\to Q$. Note that the vertical tangent bundle $T_{\pi}M$ is trivial by differentiating the torus action, so it is isomorphic to $M\times \mathfrak{t}$ where $\mathfrak{t}$ is the Lie algebra of $T$. Hence, there is a natural isomorphism between $TM$ and $\pi^*(TQ)\oplus \epsilon^r$, where $\epsilon$ is the trivial line bundle over $M$ and this isomorphism is $T$-equivariant with respect to the trivial action on the vectors in the bundle $\pi^*(TQ)\oplus \epsilon^r$. This is the consequence of the following claim.

\begin{claim} Let $G$ act freely on $M$. Then the vertical tangent bundle $T_{\pi}M$ of the quotient map $\pi\colon M\to M/G$ is isomorphic to the trivial bundle $M\times \mathfrak{g}$, where $\mathfrak{g}$ is the Lie algebra of $G$. This isomorphism is $G$-equivariant with respect to the adjoint action on $\mathfrak{g}$.
\end{claim}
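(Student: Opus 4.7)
The natural candidate is the fundamental-vector-field trivialization. For $X\in\mathfrak{g}$, define the vector field $X^*$ on $M$ by
\[
X^*_m=\tfrac{d}{dt}\big|_{t=0}\exp(tX)\cdot m.
\]
Because $t\mapsto\exp(tX)\cdot m$ stays in the orbit $G\cdot m$, which is the fiber of $\pi$ through $m$, the vector $X^*_m$ lies in $(T_{\pi}M)_m$. So I would set
\[
\Phi\colon M\times\mathfrak{g}\lra T_{\pi}M,\qquad \Phi(m,X)=X^*_m,
\]
and the plan is to check that $\Phi$ is a $G$-equivariant vector bundle isomorphism.

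For bundle-isomorphy, smoothness is immediate from smoothness of the action. Fiberwise, $\Phi_m\colon\mathfrak{g}\to(T_{\pi}M)_m$ is the differential at $e$ of the orbit map $g\mapsto g\cdot m$, whose kernel is the Lie algebra of the stabilizer of $m$; freeness of the action forces this stabilizer to be trivial, so $\Phi_m$ is injective. Since the fiber $G\cdot m$ is diffeomorphic to $G$, both sides have rank $\dim\mathfrak{g}$, and a fiberwise injection is then an isomorphism.

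For equivariance, using the identity $g\exp(tX)=\exp(t\,\mathrm{Ad}_g X)\,g$, I would compute
\[
d(g\cdot-)(X^*_m)=\tfrac{d}{dt}\big|_{t=0}g\exp(tX)\cdot m=\tfrac{d}{dt}\big|_{t=0}\exp(t\,\mathrm{Ad}_g X)\cdot(g\cdot m)=(\mathrm{Ad}_g X)^*_{g\cdot m}=\Phi(g\cdot m,\mathrm{Ad}_g X),
\]
which is exactly the desired equivariance for the diagonal action on $M\times\mathfrak{g}$ (by the given action on $M$ and by $\mathrm{Ad}$ on $\mathfrak{g}$). I do not foresee any real obstacle. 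The one conceptual point worth flagging is that this trivialization is genuinely different from the left-invariant one used in the proof of \Cref{G} (which was $G$-equivariant for the \emph{trivial} action on the fiber $\bR^n$); both are valid, but $\Phi$ is the one available for a general free action on $M\neq G$, and for the torus case in \Cref{T} the adjoint action is trivial anyway, so the two viewpoints reconcile.
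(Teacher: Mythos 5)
Your proof is correct and is essentially the same as the paper's: your $\Phi(m,X)=\tfrac{d}{dt}\big|_{t=0}\exp(tX)\cdot m$ is precisely the derivative $D(\mathrm{ev}_m)$ of the orbit map $\mathrm{ev}_m\colon g\mapsto gm$ used in the paper, and your equivariance computation via $g\exp(tX)=\exp(t\,\mathrm{Ad}_g X)\,g$ is the same identity the paper writes as $hgm=(hgh^{-1})\cdot hm$. The only difference is cosmetic (fundamental-vector-field language and a slightly more explicit injectivity argument via triviality of stabilizers).
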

To prove the claim, we shall  identify the fiber of the vertical tangent bundle $T_{\pi}(M)|_m$ at the point $m$ with the Lie algebra $\mathfrak{g}$ as follows. Let $\text{ev}_m\colon G\to M$ be the map that sends $m$ to $gm$. The derivative of this map at $g=\text{id}$ gives an isomorphism between $\mathfrak{g}$ and $T_{\pi}(M)|_m$. The derivative of the map induced by acting by an element $h\in G$ gives the map $D(h)\colon T_{\pi}(M)|_m\to T_{\pi}(M)|_{hm}$. Note that the composition $\mathfrak{g}\to T_{\pi}(M)|_m\to T_{\pi}(M)|_{hm}$ is the derivative of the map that sends $g$ to $hgm$ at $g=\text{id}$. Given that $hgm= (hgh^{-1}).hm$, we obtain a commutative diagram
\[
   \begin{tikzpicture}[node distance=3cm, auto]
  \node (B) [] {$\mathfrak{g}$};
  \node (D) [right of=B, node distance=3cm] {$ T_{\pi}(M)|_m$};
    \node (E) [below of=B, node distance=1.5cm] {$\mathfrak{g}$};
  \node (F) [below of=D, node distance=1.5cm] {$T_{\pi}(M)_{hm}.$};
  \draw [->] (B) to node {$D(\text{ev}_m)$} (D);
    \draw [->] (E) to node {$D(\text{ev}_{hm})$} (F);
  \draw [<-] (E) to node {$\text{Ad}_h$} (B);
  \draw [->] (D) to node {$D(h)$} (F);
\end{tikzpicture}
\]
This gives the claimed equivariant trivialization of $ T_{\pi}(M)$. $\qed$

Similar to the proof of \Cref{G}, it is enough to show that the torus action of the space of bundle maps $\text{\textnormal{Bun}}(TM, \nu^*\gamma_n)$ has fixed points. We think of bundle maps as the space of lifts of the classifying map $\tau_M\colon M\to \mathrm{BGL}_n^+(\bR)$ of the tangent bundle to $\mathrm{BS}\Gamma_n$. Note that the classifying map $\tau_M$ factors as $M\xrightarrow{\pi}Q\to \mathrm{BGL}_n^+(\bR)$. Recall that the map $\nu\colon\mathrm{BS}\Gamma_n\to \mathrm{BGL}_n^+(\bR)$ is at least $(n+2)$-connected (\cite[theorem 2]{thurston1974foliations}) and the dimension of $Q$ is less than $n$. Therefore, by obstruction theory, the map $Q\to \mathrm{BGL}_n^+(\bR)$ lifts to $\mathrm{BS}\Gamma_n$. Such a lift gives an element in $\text{\textnormal{Bun}}(TQ\oplus \epsilon^r,\nu^*\gamma_n)$. 

On the other hand, the torus action on $\text{\textnormal{Bun}}(TQ\oplus \epsilon^r,\nu^*\gamma_n)$ is trivial since $T$ acts trivially on $Q$ and on the bundle $\epsilon^r$. By the claim,  there is a $T$-equivariant isomorphism between $TM$ and $\pi^*(TQ)\oplus \epsilon^r$ and as a consequence we obtain a $T$-equivariant map 
 \[
\pi^*\colon\text{\textnormal{Bun}}(TQ\oplus \epsilon^r,\nu^*\gamma_n)\to \text{\textnormal{Bun}}(TM, \nu^*\gamma_n).
 \]
Therefore, the image of $\pi^*$ in $\text{\textnormal{Bun}}(TM, \nu^*\gamma_n)$ which is non-trivial lies in the fixed points of the $T$-action. \end{proof}

\begin{rem}
In the case of volume-preserving diffeomorphisms, it is a consequence of Moser's trick (\cite[Corollary 1.5.4]{MR1445290}) that the inclusion $\Diff_{\text{vol}}(M)\hookrightarrow \Diff_+(M)$ is a homotopy equivalence. So the induced map on classifying spaces $\BDiff_{\text{vol}}(M)\to \BDiff_+(M)$ is also a homotopy equivalence. Given McDuff's version of Mather-Thurston's theorem for volume-preserving diffeomorphisms, we can do the above argument to obtain foliations whose holonomies preserve the volume form. 
\end{rem}
Let us now use \Cref{T} to prove \Cref{oddspheres}.
\begin{proof}[Proof of \Cref{oddspheres}] Consider the sphere $\bS^{2n-1}$ as the quotient $\text{U}(n)/\text{U}(n-1)$. We have a standard free $\text{U}(1)$-action on $\bS^{2n-1}$ as follows
\[
\text{U}(1)\xrightarrow{\Delta} \text{U}(1)^n\to \text{U}(n)\to\text{SO}(2n)\to \Diff_+(\bS^{2n-1}).
\]
Recall that the Euler class and Pontryagin class for $\bS^{2n-1}$-bundles are defined by taking the infinite cone of each fiber to obtain a topological $\bR^{2n}$-bundle. So we have the following maps between classifying spaces
\[
\mathrm{B}\text{U}(1)\xrightarrow{\Delta} \mathrm{B}\text{U}(1)^n\to \mathrm{B}\text{SO}(2n)\to \BDiff_+(\bS^{2n-1})\to \mathrm{BHomeo}_+(\bR^{2n}).
\]
Hence, the pull-back of the monomials in classes $e$ and $p_i$ in $H^*(\mathrm{BHomeo}_+(\bR^{2n}); \bQ)$ for $i\leq n-1$ to $H^*(\mathrm{B}\text{U}(1); \bQ)$ are non-trivial multiples of powers of the first Chern class $c_1\in H^2(\mathrm{B}\text{U}(1); \bQ)$.

On the other hand, similar to the diagram \eqref{l},  \Cref{T} gives the following homotopy commutative diagram

\begin{equation}\label{ll}
\begin{gathered}
 \begin{tikzpicture}[node distance=1.8cm, auto]
  \node (A) {$\mathrm{B}\text{U}(1)$};
  \node (B) [right of=A, node distance=3cm] {$ \BDiff_+(\bS^{2n-1}).$};
  \node (C) [above of= B ] {$\mathrm{Bun}(\mathrm{T}\bS^{2n-1}, \nu^*\gamma_{2n-1})\hcoker \Diff_+(\bS^{2n-1})$};  
   \draw [->] (A) to node {$\alpha$}(B);
  \draw [->] (C) to node {$$}(B);
  \draw [->] (A) to node {$$}(C);
\end{tikzpicture}
\end{gathered}
\end{equation}

Therefore, we know that the map
\[
H^*(\BDiff_+(\bS^{2n-1}); \bQ)\to H^*(\mathrm{B}\text{U}(1); \bQ)
\]
factors through $H^*(\BdDiff_+(\bS^{2n-1}); \bQ)$ which implies that all the powers of  the classes $e$ and $p_i$ for $i\leq n-1$ are non-trivial in $H^*(\BdDiff_+(\bS^{2n-1}); \bQ)$.
\end{proof}
\begin{rem}
This argument, in fact, proves the slightly more general result that all elements in $\bQ[e,p_1,...,p_{n-1}] $ that are not in the kernel of the map $$H^*( \mathrm{B}\text{SO}(2n); \bQ)\to H^*(\mathrm{B}\text{U}(1);\bQ)$$ are non-trivial in $H^*(\BdDiff_+(\bS^{2n-1}); \bQ)$.
\end{rem}
\begin{rem}
As we mentioned, Morita observed that the classes $e^k$ and monomials in Pontryagin classes are linearly independent when $k$ is odd. It would be interesting to determine whether the map
\[
\bQ[e,p_1,\cdots, p_{n-1}]\to H^*(\BdDiff_0(\bS^{2n-1}); \bQ)
\]
is injective or not. 

\end{rem}
\begin{rem}
One interesting example on which a torus does not act freely is the higher dimensional analog of surfaces $\text{W}^n_g=\#_g \bS^n \times \bS^n$ which is the connected sum of $g$ copies of $\bS^n\times \bS^n$. Galatius, Grigoriev, and Randal-Williams proved in \cite[Theorem 4.1 (ii)]{galatius2015tautological} that there exists an $\text{SO}(n)\times \text{SO}(n)$-action $\text{W}^n_g$. They used this action to detect the non-vanishing of certain MMM-classes $\kappa_{ep_{i}}$   for all $i\in \{1,2,\dots, n-1\}$ and $g>1$. As we observed in \cite[Theorem 6.3]{nariman2014homologicalstability}, one can use this action to show that the powers of the classes $\kappa_{ep_{i}}$   for all $i\in \{1,2,\dots, n-1\}$ and $g>1$ are non-trivial in $H^*(\BdDiff(\text{W}^n_g);\bZ)$. It would be interesting to see if the classes $\kappa_{ep_{i}}$ are also non-trivial in $H^*(\BdDiff(\text{W}^n_g);\bQ)$.
 For the case of $n=1$ where $\text{W}^1_g$ is a closed genus $g$ surface,  MMM-classes $\kappa_{e^{i+1}}$ are denoted by $\kappa_i$. Kotschick and Morita (\cite{kotschick2005signatures}) showed that $\kappa_1^k$ is non-trivial in $H^{2k}(\BdDiff(\text{W}^1_g);\bQ)$ for all positive integer $k$ provided that $g\geq 3k$. The class $\kappa_2$ is not known to be non-trivial for flat surface bundles and by Bott vanishing (\cite[Theorem 8.1]{morita1987characteristic}) the classes $\kappa_i$ for $i>2$ vanish in $H^{*}(\BdDiff(\text{W}^1_g);\bQ)$. 
\end{rem}
\section{Even dimensional spheres and the Bott vanishing theorem} Here we use the Bott vanishing theorem to prove \Cref{vanishing}. First, let us recall the statement from  \cite{bott1970topological}. Let $\mathcal{F}$ be a foliation on a closed manifold $E$ of codimension $q$. Then the subalgebra of $H^*(E;\bQ)$ generated by the rational Pontryagin classes $p_i(\nu(\mathcal{F}))$ of the normal bundle $\nu(\mathcal{F})$ of $\mathcal{F}$ is trivial in cohomological degree larger than $2q$. In particular,  a monomial $P(\nu(\mathcal{F})):=p_{i_1}(\nu(\mathcal{F}))\cdots p_{i_k}(\nu(\mathcal{F}))$ vanishes if $4(i_1+i_2+\cdots+i_k)>2q$.

Recall from the introduction that to any smooth oriented sphere bundle $\bS^{2n}\to E\xrightarrow{\pi} B$, we assign Pontryagin classes $p_i(\pi)\in H^*(B; \bQ)$ by taking the infinite cone of the fibers and considering the associated topological Euclidean fiber bundle. In order to prove the vanishing theorem \ref{vanishing} for a flat smooth oriented $\bS^{2n}$-bundle $E\xrightarrow{\pi} B$, we need to relate these Pontryagin classes in $H^*(B; \bQ)$ to the Pontryagin classes of the normal bundle of the foliation on the total space in $H^*(E; \bQ)$. 
\begin{lem}\label{Igusa} Let $\pi\colon E\xrightarrow{}B$ be a smooth oriented $\bS^{m}$-bundle and let $c(\pi)\colon C(E)\to B$ be a topological $\bR^{m+1}$-bundle obtained by taking infinite cones on each fiber induced by the composition of  maps  \eqref{cone}. Let $\mathrm{T}_{\pi}E$ be the vertical tangent bundle of the fiber bundle $\pi$ and let $\epsilon$ be the trivial $\bR$-bundle over $E$. Then we have an isomorphism of topological bundles
\[
\mathrm{T}_{\pi}E\oplus \epsilon \cong \pi^*(C(E)),
\]
as topological $\bR^{m+1}$-bundles.
\end{lem}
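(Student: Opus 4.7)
The plan is to realize both $\mathrm{T}_{\pi}E\oplus\epsilon$ and $\pi^{*}(C(E))$ as fiber bundles associated to the $\bS^{m}$-bundle $\pi\colon E\to B$, but through \emph{different} $\Diff_{+}(\bS^{m})$-actions on the trivial $\bR^{m+1}$-bundle over $\bS^{m}$, and then to interpolate between the two actions by a continuous one-parameter family of cocycles. Fiberwise, the construction is transparent: at each $b\in B$, the fiber $F_{b}=\pi^{-1}(b)$ embeds canonically as the unit sphere in $C(F_{b})=\bR^{m+1}$, and at each $e\in F_{b}$ the tangent space $\mathrm{T}_{e}F_{b}$ is the $m$-plane orthogonal to the radial line $\bR\cdot e$, so there is the evident vector-space isomorphism $\mathrm{T}_{e}F_{b}\oplus\bR\to\bR^{m+1}$, $(v,t)\mapsto v+te$. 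The issue is to show that this assembles continuously over $E$: na\"ively, the transition cocycle for $\mathrm{T}_{\pi}E\oplus\epsilon$ uses the derivative $d\phi$ while the one for $\pi^{*}(C(E))$ uses the cone extension $C\phi$, and the pointwise identification does not intertwine them on the nose.

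Concretely, I would regard $\mathrm{T}_{\pi}E\oplus\epsilon$ as associated to $\pi$ via the action $(x,v)\mapsto(\phi(x),d\phi_{x}(v_{T})+v_{N}\phi(x))$ on $\bS^{m}\times\bR^{m+1}$, where $v=v_{T}+v_{N}x$ is the orthogonal decomposition, and $\pi^{*}(C(E))$ via the cone-extension action $(x,v)\mapsto(\phi(x),C\phi(v))$, where $C\phi(ry)=r\phi(y)$ is the radial homeomorphism induced by \eqref{cone}. To interpolate, I would consider, for $s\in[0,1]$, the family
\[
\Psi_{s,\phi,x}(v)=\begin{cases} s^{-1}\bigl(C\phi(x+sv)-\phi(x)\bigr), & s\in(0,1],\\ dC\phi_{x}(v), & s=0.\end{cases}
\]

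There are then three things to check. First, $\Psi_{s,\phi,x}$ is continuous in $(s,\phi,x,v)$ and extends continuously through $s=0$: since $C\phi$ is homogeneous of degree one and smooth away from the cone point, a direct chain-rule computation gives $dC\phi_{x}(v)=d\phi_{x}(v_{T})+v_{N}\phi(x)$, which is the limit of the $s>0$ formula. Second, $\Psi_{s,\phi,x}$ satisfies the cocycle condition $\Psi_{s,\phi_{1}\phi_{2},x}=\Psi_{s,\phi_{1},\phi_{2}(x)}\circ\Psi_{s,\phi_{2},x}$ for every $s$, which is a short telescoping calculation using that $C$ is a homomorphism $\Diff_{+}(\bS^{m})\to\mathrm{Homeo}_{+}(\bR^{m+1})$. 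Third, at $s=0$ the associated bundle is (by construction) $\mathrm{T}_{\pi}E\oplus\epsilon$, while at $s=1$ the action is conjugate to the cone-extension action via the fiberwise self-homeomorphism $(x,v)\mapsto(x,v+x)$ of $\bS^{m}\times\bR^{m+1}$, so the associated bundle is isomorphic to $\pi^{*}(C(E))$.

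Once these three points are established, the family of cocycles assembles into a topological $\bR^{m+1}$-bundle over $E\times[0,1]$, and homotopy invariance of topological bundles gives the required isomorphism $\mathrm{T}_{\pi}E\oplus\epsilon\cong\pi^{*}(C(E))$. The main obstacle is the transition from the smooth-bundle description on the left to the topological-bundle description on the right; the interpolation $\Psi_{s}$ is tailored to make this transition explicit by encoding the cone extension's first-order behavior at the unit sphere, where it happens to split off a trivial radial line.
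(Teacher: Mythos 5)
Your argument is correct, but it takes a genuinely different route from the paper. The paper's proof is a two-line reduction: it quotes a result of Igusa (from the proof of \cite[Corollary 1.4]{MR2509703}) that for a smooth oriented sphere bundle $p\colon L\to X$ with a section $s$, the coned bundle $C(L)$ is isomorphic to $s^*(\mathrm{T}_pL)\oplus\epsilon$ as a topological Euclidean bundle, and then applies this to the pulled-back sphere bundle $q\colon\pi^*(E)\to E$, which carries a tautological diagonal section $s$ with $s^*(\mathrm{T}_q\pi^*(E))\cong\mathrm{T}_\pi E$ and $C(\pi^*(E))\cong\pi^*(C(E))$. You instead reprove the substance of Igusa's lemma directly: you exhibit both sides as bundles associated to the underlying principal $\Diff_+(\bS^m)$-bundle via two actions on $\bS^m\times\bR^{m+1}$, and connect them by the difference-quotient cocycle $\Psi_{s,\phi,x}(v)=s^{-1}\bigl(C\phi(x+sv)-\phi(x)\bigr)$. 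Your three checks all go through: the telescoping identity $\phi_2(x)+s\Psi_{s,\phi_2,x}(v)=C\phi_2(x+sv)$ gives the cocycle condition from $C(\phi_1\phi_2)=C\phi_1\circ C\phi_2$; the chain rule at $s=0$ yields $dC\phi_x(v)=d\phi_x(v_T)+v_N\phi(x)$, identifying the $s=0$ bundle with $\mathrm{T}_\pi E\oplus\epsilon$ under the equivariant splitting $v\mapsto(v_T,v_N)$; and conjugation by $(x,v)\mapsto(x,v+x)$ identifies the $s=1$ action with the cone action, whose associated bundle is $\pi^*(C(E))$. The only points deserving explicit care are routine: joint continuity at $s=0$ (via $s^{-1}(C\phi(x+sv)-C\phi(x))=\int_0^1 dC\phi_{x+tsv}(v)\,dt$, valid on compacta in $v$ once $s$ is small enough that $x+tsv$ avoids the cone point), and the fact that each $\Psi_{s,\phi,x}$ is a homeomorphism of all of $\bR^{m+1}$ (clear, since it is an affine conjugate of $C\phi$ for $s>0$ and a linear isomorphism at $s=0$). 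What each approach buys: the paper's is shorter but outsources the real content to a citation; yours is self-contained, makes explicit exactly where the passage from the smooth to the topological category occurs, and is essentially the classical difference-quotient argument identifying the topological microbundle of a smooth bundle with its tangent bundle, adapted to the cone extension.
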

\begin{proof}
In the proof of \cite[Corollary 1.4]{MR2509703}, Igusa showed that if  $p\colon L\to X$ is a smooth oriented sphere bundle over a compact manifold $X$ and  $s\colon X\to L$ is a section, then $C(L)$,  the cone of $L$ induced by the composition of  maps \eqref{cone} in the introduction, is isomorphic to $s^*(\mathrm{T}_{p}L)\oplus \epsilon$ as topological Euclidean space fiber bundles.

We denote the canonical section of the sphere bundle $q\colon \pi^*(E)\to E$  by $s$. Note that $\pi^*(C(E))$ is isomorphic to $C(\pi^*(E))$. By Igusa's theorem, the fiber bundle $\pi^*(C(E))$ is isomorphic to $s^*(\mathrm{T}_{q}\pi^*(E))\oplus \epsilon$. On the other hand, $s^*(\mathrm{T}_{q}\pi^*(E))$ is isomorphic to $\mathrm{T}_{\pi}E$. Hence, his theorem implies that 
\[
\mathrm{T}_{\pi}E\oplus \epsilon \cong \pi^*(C(E)),
\]
as topological $\bR^{m+1}$-bundles.
\end{proof}
Now suppose that $\pi\colon E\xrightarrow{}B$ is a smooth oriented flat $\bS^{2n}$-bundle. The vertical tangent bundle $\mathrm{T}_{\pi}E$ is the normal of the foliation on $E$. By definition, the Pontryagin classes $p_i(\pi)$ are defined to be the Pontryagin classes of the infinite cone bundle $C(E)$. On the other hand, by \Cref{Igusa}, we have
\[
p_i(\mathrm{T}_{\pi}E)=\pi^*(p_i(C(E)).
\]
So for any monomial $P(\mathrm{T}_{\pi}E):=p_{i_1}(\mathrm{T}_{\pi}E)\cdots p_{i_k}(\mathrm{T}_{\pi}E)$, we have $P(\mathrm{T}_{\pi}E)=\pi^*(P(\pi))$, where $P(\pi)$ is the monomial $p_{i_1}(\pi)\cdots p_{i_k}(\pi)$. If we fiber integrate $ e(\mathrm{T}_{\pi}E)\cdot P(\mathrm{T}_{\pi}E)$, which is the MMM-class $\kappa_{eP}$, we obtain
\[
\pi_!(e(\mathrm{T}_{\pi}E)\cdot P(\mathrm{T}_{\pi}E))= \pi_!(e(\mathrm{T}_{\pi}E)\cdot \pi^*(P(\pi)))= 2\cdot P(\pi),
\]
since $\pi_!(e(\mathrm{T}_{\pi}E))=\chi(\bS^{2n})=2$. By the Bott vanishing theorem, we know that $P(\mathrm{T}_{\pi}E)=0$ if $\text{deg}(P(\mathrm{T}_{\pi}E))>4n$. Hence, $P(\pi)=0$ if $P$ is a monomial of Pontryagin classes $p_1(\pi), \dots, p_n(\pi)$ whose degree is larger than $4n$.
\appendix

 \bibliographystyle{alpha}
\bibliography{reference}
\end{document}